\newtheorem{theorem}{Theorem}[section]
\newtheorem{lemma}[theorem]{Lemma}
\author{Rafael Villarroel-Flores\\ Universidad Autónoma del Estado de Hidalgo\\ Carretera Pachuca-Tulancingo km. 4.5\\ Pachuca 42184 Hgo. México\\ email: \texttt{rafaelv@uaeh.edu.mx}\\ MSC: 05C76\\ keywords: helly property, clique graphs}
\date{\today}
\title{On the clique behavior and Hellyness of the complements of regular graphs}
\begin{document}

\maketitle
\begin{abstract}
A collection of sets is intersecting, if any pair of sets in the collection has nonempty intersection.
A collection of sets \(\mathcal{C}\) has the Helly property if any intersecting subcollection has nonempty intersection. A graph is \emph{Helly} if the collection of maximal complete subgraphs of \(G\) has the Helly property. We prove that if \(G\) is a \(k\)-regular graph with \(n\) vertices such that \(n>3k+\sqrt{2k^{2}-k}\), then the complement \(\overline{G}\) is not Helly. We also consider the problem of whether the properties of Hellyness and convergence under the clique graph operator are equivalent for the complement of \(k\)-regular graphs, for small values of \(k\).
\end{abstract}

\section{Introduction}
\label{sec:orgbd8452c}

In this paper we consider only finite simple graphs. As a reference for Graph Theory, we follow \cite{MR0256911-djvu}. We usually identify subsets of vertices of a graph \(G\) with the subgraph they induce. A \emph{complete} in a graph \(G\) is a set of vertices which are adjacent by pairs, and a \emph{clique} is a maximal complete. The \emph{clique graph} \(K(G)\) of a graph \(G\) is the intersection graph of the cliques of \(G\). We can then define a sequence of \emph{iterated clique graphs} as: \(K^{0}(G)=G\), \(K^{n}(G)=K(K^{n-1}(G))\) for \(n\geq1\). If this sequence of graphs has a finite number of different graphs up to isomorphism, we say that \(G\) is \emph{convergent}, otherwise, \(G\) is said to be \emph{divergent}. There is a number of criteria in the literature in order to determine which of these \emph{behaviors} correspond to a given graph \(G\), however it is conjectured that the problem in general is algorithmically unsolvable for finite graphs (\cite{2021-cedillo-pizana-clique-convergence-is-undecidable-for-automatic-graphs}).

A collection of sets is \emph{intersecting} if any two members of the collection has nonempty intersection. The collection has the \emph{Helly property} if any intersecting subcollection has nonempty intersection. We say that a graph \(G\) is \emph{Helly} if the collection of the cliques of \(G\) has the Helly property (in the literature, this is usually called the \emph{clique-Helly property}). The Helly property on cliques can be tested in polynomial time (\cite{1989-dragan-centers-of-graphs-and-the-helly-property},  \cite{1997-szwarcfiter-recognizing-clique-helly-graphs}), and has been studied in several papers (for example, \cite{2009-dourado-protti-szwarcfiter-complexity-aspects-of-the-helly-property}, \cite{2007-lin-szwarcfiter-faster-recognition-of-clique-helly-and-hereditary-clique-helly-graphs}) and generalized (\cite{2007-dourado-protti-szwarcfiter-characterization-and-recognition-of-generalized-clique-helly-graphs}, \cite{dourado-grippo-safe-2022-on-the-generalized-helly-property-of-hypergraphs-cliques-and-bicliques}).

A Helly graph is convergent (\cite{MR0329947}) but there are convergent graphs which are not Helly. However, there are several graph classes for which it is has been proven that a graph in such a class is convergent if and only if it is Helly. Such classes include that of cographs (\cite{2004-larrion-mello-morgana-neumann-lara-pizana-the-clique-operator-on-cographs-and-serial-graphs}), complements and powers of cycles (\cite{zbMATH03641500}, \cite{MR2489268}), chessboard graphs (\cite{chessboard-graphs}) and circulants with three small jumps (\cite{2014-larrion-pizana-villarroel-flores-the-clique-behavior-of-circulants-with-three-small-jumps}).

In this note we consider a class of regular graphs. We want to study regular graphs with high degree, hence it will be convenient for us to consider the complement. We prove that for \(k=1,2\), the complements of \(k\)-regular graphs are convergent if and only if they are Helly, and we show that this happens exactly when the order of the graph is sufficiently small. Then for each \(k\geq 3\), we determine a number \(N(k)\) such that all \(k\)-regular graphs with at least \(N(k)\) vertices are such that their complements are not Helly. In contrast to the cases \(k=1,2\), we can show examples of \(3\)-regular graphs \(G\) such that \(\overline{G}\) is not Helly, and still \(\overline{G}\) is convergent. In fact, we conjecture that there are arbitrarily large \(3\)-regular graphs \(G\) where the complement \(\overline{G}\) is convergent but not Helly.

\section{Preliminaries}
\label{sec:orgbc1ced3}

\subsection{Definitions}
\label{sec:orgf2aeb8c}

The \emph{disjoint union} \(G\cup H\) of two graphs \(G,H\) is the graph that has as vertex set the disjoint union of the vertex sets of \(G\) and \(H\), and as edge set the union of the edge sets of \(G\) and \(H\). We can extend this definition to the disjoint union of a finite collection of graphs. The disjoint union of \(m\) copies of the graph \(G\) is denoted by \(mG\). We denote by \(G+H\) the graph obtained from the disjoint union of \(G\) and \(H\), adding all edges between a vertex in \(G\) and a vertex in \(H\). It is immediate that \(\overline{G\cup H}=\overline{G}+\overline{H}\).

The \emph{\(m\)-th octahedron} \(O_{m}\) is the complement of the graph \(mK_{2}\).

A \emph{triangle} in a graph \(G\) is a complete with three vertices. If \(T\) is a triangle, then its \emph{extended triangle} \(\hat{T}\) in \(G\) is the induced subgraph of \(G\) on the vertices that are adjacent to at least two elements of \(T\). A graph is a \emph{cone} if it has a vertex adjacent to all other vertices in the graph.

A \emph{cotriangle} in a graph \(G\) is an independent set of three of its vertices. 

If \(G\) is a graph, and \(\sigma\colon G\to G\) is an automorphism, we say that \(\sigma\) is a \emph{coaffination} if \(\sigma(x)\ne x\) and \(\sigma(x)\) is not a neighbor of \(x\) for every \(x\in G\). Note that the complement of every cyclic graph has a coaffination.

\subsection{Theorems on clique behavior}
\label{sec:org9f2d4e0}

The following theorem appeared in \cite{1989-dragan-centers-of-graphs-and-the-helly-property} but it was proved independently in \cite{1997-szwarcfiter-recognizing-clique-helly-graphs}.

\begin{theorem}
\label{helly-extended}
(\cite{1989-dragan-centers-of-graphs-and-the-helly-property}, \cite{1997-szwarcfiter-recognizing-clique-helly-graphs}) A graph \(G\) is Helly if and only if for any triangle \(T\) in \(G\), the extended triangle \(\hat{T}\) is a cone.
\end{theorem}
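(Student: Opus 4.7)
The plan is to derive both implications from the same natural device: for a triangle $T=\{a,b,c\}$, examine the family of cliques that each contain some edge of $T$, and exploit the fact that every such clique sits inside $\hat{T}$.

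For $G$ Helly $\Rightarrow$ $\hat{T}$ is a cone, I would enumerate $\hat{T}=\{w_1,\dots,w_m\}$; since each $w_i$ is adjacent to at least two vertices of $T$, pick a clique $C_i$ extending the triangle formed by $w_i$ with those two vertices. Any two $C_i, C_j$ each meet $T$ in two vertices, so $C_i\cap C_j$ contains at least one vertex of $T$, and the family $\{C_i\}$ is pairwise intersecting. The Helly hypothesis yields $u\in \bigcap_i C_i$; then $u\in\hat{T}$ (because $u$ lies in some $C_i$ meeting $T$ in two vertices) while being adjacent to or equal to every $w_i$, so $u$ is the apex of $\hat{T}$.

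For the converse I would argue by contradiction with a minimum counterexample. Assume every extended triangle is a cone and let $\mathcal{F}=\{C_1,\dots,C_n\}$ be a pairwise intersecting family of cliques with empty intersection, of the least possible size; then $n\ge 3$, and by minimality there exists $v_i \in \bigcap_{j\ne i} C_j$ with $v_i \notin C_i$ for each $i$. The three vertices $v_1,v_2,v_3$ are distinct and pairwise adjacent (e.g.\ $v_1,v_2\in C_3$), so $T=\{v_1,v_2,v_3\}$ is a triangle. The key claim is that every $C_i\subseteq\hat{T}$: each $C_i$ contains at least two of $v_1,v_2,v_3$ (all three when $i\ge 4$, the other two when $i\le 3$), so each of its vertices is adjacent to at least two vertices of $T$. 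Applying the cone hypothesis to $\hat{T}$ yields an apex $u$ adjacent to every vertex of every $C_i$; by the maximality of cliques, $u\in C_i$ for all $i$, contradicting $\bigcap\mathcal{F}=\emptyset$.

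The chief subtlety is the converse. The only structural fact available is the cone condition at a single triangle, so reducing from an arbitrary pairwise intersecting family of cliques to one such triangle is the crux; minimality of $\mathcal{F}$ does the essential work, because the vertices $v_i$ lying in all but one clique are what makes the candidate triangle $\{v_1,v_2,v_3\}$ actually a triangle and forces the inclusion $C_i\subseteq\hat{T}$. Once that inclusion is in hand, the cone property closes the argument immediately.
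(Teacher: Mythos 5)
Your proof is correct: both directions are sound, and the converse's minimal-counterexample argument (extracting $v_i\in\bigcap_{j\ne i}C_j$, forming the triangle $T=\{v_1,v_2,v_3\}$, showing each $C_i\subseteq\hat{T}$, and using maximality of cliques to place the apex in every $C_i$) closes the gap exactly as needed. The paper itself does not prove this theorem but only cites \cite{1989-dragan-centers-of-graphs-and-the-helly-property} and \cite{1997-szwarcfiter-recognizing-clique-helly-graphs}, and your argument is essentially the standard one found in those sources.
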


We can now pose a theorem that gives a sufficient condition for convergence.

\begin{theorem}
\label{helly-convergent}
(\cite{MR0329947}) If a graph \(G\) is Helly, then \(G\) is convergent.
\end{theorem}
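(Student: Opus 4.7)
The plan is to follow Escalante's classical approach and show that if $G$ is Helly then $K^{2}(G)$ embeds as an induced subgraph of $G$. Since $G$ is finite, this immediately bounds the number of isomorphism types appearing in the iterated clique sequence, which is precisely what is required for convergence.

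The construction of the embedding is driven directly by the Helly hypothesis. A vertex of $K^{2}(G)$ is a clique $\mathcal{D}$ of $K(G)$, that is, a maximal family of pairwise intersecting cliques of $G$; by Helly in $G$, the family $\mathcal{D}$ has a common vertex, and I would define $\psi\colon V(K^{2}(G))\to V(G)$ by sending $\mathcal{D}$ to one such common vertex (made canonical by fixing a linear order on $V(G)$). For injectivity, if $\psi(\mathcal{D}_{1})=\psi(\mathcal{D}_{2})=v$ then every clique in $\mathcal{D}_{1}\cup\mathcal{D}_{2}$ contains $v$, so the union is pairwise intersecting, and the maximality of each $\mathcal{D}_{i}$ forces $\mathcal{D}_{1}=\mathcal{D}_{2}$. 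Adjacency is preserved because any clique $Q$ shared by $\mathcal{D}_{1}$ and $\mathcal{D}_{2}$ contains both $\psi(\mathcal{D}_{i})$, giving an edge in $G$; it is reflected because if $\psi(\mathcal{D}_{1})\psi(\mathcal{D}_{2})$ is an edge of $G$, then any clique $Q$ of $G$ containing both endpoints meets every element of $\mathcal{D}_{1}\cup\mathcal{D}_{2}$ (at $\psi(\mathcal{D}_{1})$ or $\psi(\mathcal{D}_{2})$), whence $Q\in\mathcal{D}_{1}\cap\mathcal{D}_{2}$ by the maximality of the $\mathcal{D}_{i}$.

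To iterate the argument I would also verify that Hellyness passes to $K(G)$, using Theorem~\ref{helly-extended}: given a triangle $\mathcal{D}_{1}\mathcal{D}_{2}\mathcal{D}_{3}$ in $K(G)$, one locates by Helly in $G$ a common vertex $v$ of $\mathcal{D}_{1}\cup\mathcal{D}_{2}\cup\mathcal{D}_{3}$, and checks that the family of cliques of $G$ through $v$ provides a cone apex for the extended triangle of $\mathcal{D}_{1}\mathcal{D}_{2}\mathcal{D}_{3}$ in $K(G)$. Induction then yields that $K^{2n}(G)$ is an induced subgraph of $G$ for every $n\geq 0$, so the even iterates realize only finitely many isomorphism types; applying $K$ once more handles the odd iterates, and $G$ is convergent. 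The step I expect to be most delicate is the preservation of Hellyness under $K$: the embedding $\psi$ is a fairly formal consequence of the Helly hypothesis, but identifying the cone apex for extended triangles in $K(G)$ requires carefully translating the clique structure of $G$ through Theorem~\ref{helly-extended}.
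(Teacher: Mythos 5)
The paper offers no proof of this statement; it is quoted from Escalante \cite{MR0329947}, and your reconstruction follows exactly the classical route. The embedding half of your argument is correct and complete: for a Helly graph \(G\), sending each clique \(\mathcal{D}\) of \(K(G)\) to a common vertex (which exists by the Helly property) is injective, preserves adjacency, and reflects it, so \(K^{2}(G)\) is isomorphic to an induced subgraph of \(G\); and your reduction of convergence to ``finitely many isomorphism types among the even iterates, then apply \(K\) once for the odd ones'' matches the paper's definition of convergence.

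The genuine gap is in the step you yourself flag: that \(K(G)\) is again Helly, which is indispensable, since being an induced subgraph of a Helly graph does not by itself make \(K^{2}(G)\) Helly, and without it the induction on \(K^{2n}(G)\) collapses. As sketched, your cone apex does not typecheck: an apex for the extended triangle of a triangle \(\{Q_{1},Q_{2},Q_{3}\}\) in \(K(G)\) must be a single vertex of \(K(G)\), i.e.\ one clique of \(G\) meeting every clique that meets at least two of the \(Q_{i}\), whereas ``the family of cliques of \(G\) through \(v\)'' is a set of vertices of \(K(G)\) (it lives one level up, and need not even be a clique of \(K(G)\), since the star of cliques at a vertex can fail maximality --- already in the path \(abc\) the star at \(a\) is properly contained in a larger intersecting family). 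Nor is it clear that an arbitrary clique through \(v\) meets every member of the extended triangle, so the verification does not go through as written. The standard repair avoids Theorem \ref{helly-extended} entirely: if \(G\) is Helly, then any clique \(\mathcal{D}\) of \(K(G)\) has a common vertex \(v\), and maximality forces \(\mathcal{D}\) to be precisely the set of all cliques of \(G\) containing \(v\); given a pairwise intersecting family of cliques of \(K(G)\), with associated vertices \(v_{1},\dots,v_{m}\), any two \(v_{i},v_{j}\) lie in a shared clique of \(G\) and hence are adjacent or equal, so \(\{v_{1},\dots,v_{m}\}\) is a complete of \(G\), and any clique of \(G\) containing it belongs to every member of the family. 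With that lemma in place, your induction and hence the whole proof are sound.
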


We now mention some theorems that imply divergence of a graph.

\begin{theorem}
\label{octa}
(\cite{zbMATH03641500}) The graph \(O_{m}\) is divergent if and only if \(m\geq3\).
\end{theorem}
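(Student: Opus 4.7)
My plan is to analyze the cliques of $O_m$ explicitly and then show that $K(O_m)$ is itself an octahedron of a controlled size. Write $V(O_m) = \{a_1, b_1, \ldots, a_m, b_m\}$, where the pairs $\{a_i, b_i\}$ are the non-edges of $O_m$. Since the only non-adjacencies are within these pairs, a complete subgraph of $O_m$ is any set containing at most one vertex from each pair, and a clique (maximal complete) is obtained by selecting exactly one vertex from each pair. Hence the cliques of $O_m$ are in bijection with $\{0,1\}^m$, and there are exactly $2^m$ of them, each isomorphic to $K_m$.

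Next I would determine adjacencies in $K(O_m)$. Two cliques $C, C'$ share a vertex iff the corresponding binary strings agree in at least one coordinate, and they are disjoint iff the strings are bitwise complementary. Therefore $K(O_m)$ is the graph on $2^m$ vertices in which the only non-edges form a perfect matching (pairing each string with its complement). This is precisely $\overline{2^{m-1}K_2}$, i.e.,
\[
K(O_m) \;\cong\; O_{2^{m-1}}.
\]
This identification is the heart of the argument, and it is the step where one must be careful: one needs the vertices/strings to be paired correctly by the complementation involution and to verify that all other pairs of cliques indeed share at least one vertex.

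With this recurrence in hand, the theorem follows easily. For $m = 1$, $O_1$ consists of two isolated vertices, and one checks directly $K(O_1) = O_1$. For $m = 2$, $O_2 = C_4$ and a direct inspection gives $K(C_4) = C_4 = O_2$. In both cases the iterated clique sequence is eventually (in fact immediately) periodic, so $O_m$ is convergent. For $m \geq 3$, the formula gives $|V(K^n(O_m))|$ as an iterated tower $m, 2^{m-1}, 2^{2^{m-1}-1}, \ldots$, which is strictly increasing because $2^{t-1} > t$ for every $t \geq 3$. Hence $K^n(O_m)$ has strictly increasing order, so the sequence contains infinitely many non-isomorphic graphs and $O_m$ is divergent.

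The main technical obstacle is the isomorphism $K(O_m) \cong O_{2^{m-1}}$; once established, the inductive conclusion and the small-case verifications are routine. A secondary subtlety is simply being precise about the $m=1$ case, where $O_1$ is disconnected and each isolated vertex is its own clique.
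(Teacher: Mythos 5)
Your proposal is correct. Note that the paper does not prove this statement at all: it is quoted as a known result of Neumann-Lara (the cited reference), so there is no in-paper proof to compare against. Your argument is in fact the classical one from that source: the cliques of \(O_{m}\) are exactly the transversals of the \(m\) non-edge pairs, two such transversals are disjoint precisely when they are bitwise complementary, hence \(K(O_{m})\cong O_{2^{m-1}}\); since \(2^{m-1}>m\) for \(m\geq 3\), the orders of the iterated clique graphs strictly increase and \(O_{m}\) diverges, while the cases \(m=1,2\) are fixed points (\(K(O_{1})=O_{1}\), \(K(C_{4})=C_{4}\)) and hence convergent. All steps check out, including the identification of the cliques and the disjointness criterion, so the proof is complete and essentially self-contained.
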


\begin{theorem}
\label{big-cycles}
(\cite{MR2489268}, Theorem 5.4) The complement of a cycle \(C_{n}\) is divergent if and only if \(n\geq8\).
\end{theorem}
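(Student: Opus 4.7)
The plan is to prove the two directions of the equivalence separately. The easy direction asserts that $\overline{C_n}$ is convergent whenever $n\leq 7$, and the main direction asserts that $\overline{C_n}$ is divergent whenever $n\geq 8$; the bulk of the work is in the second.

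For $n\leq 7$ I would proceed by inspection. The cases $n=3,4,5$ are immediate since $\overline{C_3}=\overline{K_3}$, $\overline{C_4}=2K_2$, and $\overline{C_5}=C_5$ are all easily Helly, and hence convergent by Theorem \ref{helly-convergent}. For $n=6$, the cliques of $\overline{C_6}$ are the two disjoint triangles $\{1,3,5\}$, $\{2,4,6\}$ together with the three ``diameters'' $\{1,4\}, \{2,5\}, \{3,6\}$; no three of these are pairwise intersecting, so the Helly property holds trivially. For $n=7$ the cliques of $\overline{C_7}$ are its seven triangles, and it suffices to verify via Theorem \ref{helly-extended} that one of them, say $T=\{1,3,5\}$, has its extended triangle $\hat T=\{1,3,5,6,7\}$ equal to a cone (with apex $3$); the dihedral symmetry of $\overline{C_7}$ then propagates the verification to every other triangle.

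For $n\geq 8$ the strategy is to locate a structural witness of divergence inside $\overline{C_n}$. When $n\geq 9$, the six vertices $\{1,2,4,5,7,8\}$ induce a copy of $O_3$ in $\overline{C_n}$: the three non-edges are precisely the $C_n$-edges $12$, $45$, $78$. I would then try to show this $O_3$ is a retract of $\overline{C_n}$, or at least that some $O_m$ with $m\geq 3$ appears as a retract of an iterate $K^{j}(\overline{C_n})$. Since retracts inherit divergence and $O_m$ is divergent for $m\geq 3$ by Theorem \ref{octa}, this would conclude the argument. If the naive retraction is blocked by ``middle'' vertices like $3$ and $6$ (which are adjacent to every vertex of the candidate $O_3$ in $\overline{C_n}$), I would instead exploit the coaffination $\sigma\colon x\mapsto x+1$ of $\overline{C_n}$, which exists in the sense of the preliminaries because $x$ and $x+1$ are non-adjacent in $\overline{C_n}$, to produce a fibered family of octahedra in iterated clique graphs.

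The main obstacle is the boundary case $n=8$. A short gap-counting argument shows that any six vertices of $C_8$ span at least four cyclic edges (never exactly three disjoint ones), so $\overline{C_8}$ does \emph{not} contain $O_3$ as an induced subgraph and the uniform argument for $n\geq 9$ is unavailable. Here I would fall back on direct computation: enumerate the $10$ cliques of $\overline{C_8}$ (the two size-$4$ antipodal cliques $\{1,3,5,7\}$, $\{2,4,6,8\}$ and the eight triangles with cyclic gap pattern $(2,3,3)$), describe $K(\overline{C_8})$ explicitly, and then exhibit $O_m$ with $m\geq 3$ as a retract of $K(\overline{C_8})$ or $K^{2}(\overline{C_8})$. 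The meticulous bookkeeping in this final step is the true technical heart of the proof.
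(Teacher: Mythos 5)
The paper does not prove this statement at all: it is quoted from the literature (\cite{MR2489268}, Theorem 5.4), so your sketch has to stand on its own, and in its main direction it does not. The convergence half ($n\le 7$) is fine: triangle-freeness or the extended-triangle test plus Theorem \ref{helly-convergent} settles those small cases, and your computation for $\overline{C_7}$ (extended triangle $\{1,3,5,6,7\}$ coned by $3$) is correct. The divergence half, however, rests on steps that are either unproven or false as stated. An \emph{induced} $O_3$ in $\overline{C_n}$ gives nothing by itself (a cone over $O_3$ contains an induced $O_3$ and is convergent); what you need is that $O_3$ is a \emph{retract}, and you never establish this. Worse, the naive retraction you propose provably fails in general: for $n=10$, fixing the induced $O_3$ on $\{1,2,4,5,7,8\}$, the homomorphism conditions force $r(9)=7$, $r(10)=2$, hence $r(3)=5$ (since $1\not\sim 2$), and then $r(6)$ must lie in $\{4,8\}$ while being adjacent to both $5$ and $7$ — impossible. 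Since all induced copies of $O_3$ in $\overline{C_{10}}$ are equivalent under its automorphisms, no induced $O_3$ is a retract there, so ``show this $O_3$ is a retract of $\overline{C_n}$'' cannot be the argument. (Also, your parenthetical is factually off: in $\overline{C_n}$ vertex $3$ is \emph{not} adjacent to every vertex of $\{1,2,4,5,7,8\}$ — it misses its cycle neighbors $2$ and $4$.)

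The fallback you offer is not a proof either. The coaffination $x\mapsto x+1$ exists in $\overline{C_n}$ for every $n\ge 5$, including $n=5,6,7$ where the graph is convergent, so no argument that uses only the coaffination can separate $n\ge 8$ from $n=7$; and the coaffination-based divergence criteria available in this paper (Theorems \ref{connected-sum} and \ref{three-summands}) require a join decomposition, which $\overline{C_n}$ never has because $C_n$ is connected. ``Produce a fibered family of octahedra in iterated clique graphs'' is exactly the hard content of the cited theorem, not a step you can defer. Finally, the boundary case $n=8$ — which has no induced $O_3$, as you correctly note — is left as ``meticulous bookkeeping'' that is never carried out, and it is not at all clear that $K(\overline{C_8})$ or $K^2(\overline{C_8})$ has an octahedral retract; historically this case required genuinely different machinery (expansivity/rank-type arguments) in the source being cited. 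As it stands, the divergence direction for every $n\ge 8$ remains unproven in your proposal.
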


\begin{theorem}
\label{connected-sum}
(\cite{MR2489268}, Theorem 4.6) If \(G,H\) are graphs that have a coaffination, and \(H\) is connected, then \(G+H\) is divergent.
\end{theorem}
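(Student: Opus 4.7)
The plan is to apply Theorem~\ref{octa} by realizing an octahedron $O_{m}$ with $m\geq 3$ as a retract of some iterated clique graph $K^{n}(G+H)$; divergence of $G+H$ then follows from the classical fact that retracts of convergent graphs are convergent. The structural basis is that every clique of the join $G+H$ has the form $C\cup D$ with $C\in\mathcal{C}(G)$ and $D\in\mathcal{C}(H)$, so the vertex set of $K(G+H)$ identifies naturally with $\mathcal{C}(G)\times\mathcal{C}(H)$, with $(C_{1},D_{1})\sim(C_{2},D_{2})$ iff $C_{1}\cap C_{2}\neq\emptyset$ or $D_{1}\cap D_{2}\neq\emptyset$.

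The key lemma I would first establish is that a coaffination lifts to the clique graph: if $\sigma$ is a coaffination of $G$, then $C\mapsto\sigma(C)$ is a coaffination of $K(G)$, because any $x\in C\cap\sigma(C)$ would force $\sigma^{-1}(x)$ and $x$ to be distinct, non-adjacent, and both members of $C$, contradicting that $C$ is a complete. Iterating, $K^{n}(G)$ and $K^{n}(H)$ remain coaffinated; since the clique graph of a connected graph is connected, $K^{n}(H)$ also remains connected. Combining the non-edges $\{D,\tau(D)\}$ supplied by the coaffination on $K^{n-1}(H)$ together with a non-edge $\{C,\sigma(C)\}$ from $K^{n-1}(G)$, and exploiting connectedness of $K^{n-1}(H)$ to guarantee the required cross-intersections between the chosen $H$-cliques, one selects six cliques of $G+H$ inducing an $O_{3}$ inside $K^{n}(G+H)$.

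The principal obstacle is the final step: promoting this induced $O_{3}$ to a retract of $K^{n}(G+H)$, since induced subgraphs in general do not inherit clique divergence. I would construct the retraction by collapsing each factor $\mathcal{C}(K^{n-1}(G))$ and $\mathcal{C}(K^{n-1}(H))$ onto its six distinguished cliques along the orbits of $\sigma$ and $\tau$, and then verify the graph-homomorphism condition using the intersection description of adjacency. The connectedness hypothesis on $H$ is essential here: in the degenerate case $G=H=\overline{K_{2}}$ (both coaffinated, but neither connected) one obtains $G+H=C_{4}=O_{2}$, which is convergent by Theorem~\ref{octa}, so connectedness must enter decisively into the construction.
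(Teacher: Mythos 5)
Your preparatory steps are fine: the description of the cliques of \(G+H\) as unions \(C\cup D\), the identification of adjacency in \(K(G+H)\), and especially the lemma that a coaffination \(\sigma\) of \(G\) induces the coaffination \(C\mapsto\sigma(C)\) of \(K(G)\) (your argument via \(x,\sigma^{-1}(x)\in C\) is correct) are all sound, as is the preservation of connectedness under \(K\). But the proof has a genuine gap exactly where you locate the ``principal obstacle,'' and the gap is not closed. First, even the selection of six cliques of \(G+H\) inducing an \(O_{3}\) in \(K^{n}(G+H)\) is only asserted: the two coaffinations readily give an induced \(O_{2}\) (the four pairs \((C,D),(\sigma C,\tau D),(C,\tau D),(\sigma C,D)\)), but producing a third non-adjacent pair adjacent to all four requires an actual argument in which connectedness of \(H\) must intervene --- note, for instance, that for \(G=H=C_{5}\) the join \(G+H\) contains no induced \(O_{3}\) at all, so this cannot be done at level \(n=0\) and is not automatic at higher levels. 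Second, and decisively, the retraction is never constructed: collapsing each factor onto six distinguished cliques ``along the orbits of \(\sigma\) and \(\tau\)'' is in general not even a graph homomorphism, because the orbit of a clique under a coaffination is not a module --- two cliques in the same orbit can have different neighbors among the distinguished six, so the collapse map need not preserve edges, and no verification (or repair) is offered. Since, as you yourself observe, an induced divergent subgraph implies nothing about divergence, the entire argument rests on this unproven retraction, so what you have is a plan rather than a proof. Moreover, nothing in your construction actually uses the connectedness of \(H\) beyond a heuristic remark, even though your own example \(\overline{K_{2}}+\overline{K_{2}}=O_{2}\) shows it must enter essentially.

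For comparison: the paper does not prove this statement at all --- it is quoted from the cited reference --- and the proof there does not proceed by exhibiting an octahedral retract. It runs through the theory of coaffine and expansive graphs (Neumann-Lara's machinery): one shows that the coaffination structure on \(G+H\), together with the connectedness of \(H\), forces the orders \(|K^{n}(G+H)|\) to grow without bound (expansivity), which is where the hypothesis on \(H\) does its work. If you want to salvage a retract-style argument you would need a genuinely new idea for producing a divergent retract of some \(K^{n}(G+H)\); the orbit-collapse map as described will not serve.
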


\begin{theorem}
\label{three-summands}
(\cite{MR2489268}, Theorem 3.6) If \(A\), \(B\), \(C\) are graphs that have a coaffination, then \(A+B+C\) is divergent.
\end{theorem}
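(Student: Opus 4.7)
My plan is to reduce Theorem~\ref{three-summands} to Theorem~\ref{connected-sum} by showing that $B+C$ is itself a connected graph carrying a coaffination, so that the hypothesis of Theorem~\ref{connected-sum} applies with $G = A$ and $H = B+C$. Since the join is associative, $A + (B+C) = A+B+C$, and divergence follows in one line.

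To build a coaffination on $B+C$, I would glue the given coaffinations $\sigma_B$ and $\sigma_C$ into the map $\sigma$ that acts as $\sigma_B$ on $V(B)$ and as $\sigma_C$ on $V(C)$. Since $\sigma$ preserves the bipartition $V(B) \sqcup V(C)$ setwise and every vertex of $B$ is joined to every vertex of $C$ in $B+C$, all cross-edges of the join are automatically preserved; combined with the fact that $\sigma_B$ and $\sigma_C$ are automorphisms of $B$ and $C$ individually, this makes $\sigma$ an automorphism of $B+C$. For any $x \in V(B)$, $\sigma(x) = \sigma_B(x)$ differs from $x$ and is non-adjacent to $x$ inside $B$; since the join adds no edges within $V(B)$, these two vertices remain non-adjacent in $B+C$, and the $C$-side is symmetric. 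Hence $\sigma$ is a coaffination of $B+C$.

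Connectedness of $B+C$ is immediate: since $B$ and $C$ both contain at least two vertices (a graph with a coaffination cannot consist of a single vertex), any two vertices of $B+C$ either lie on opposite sides of the join and are adjacent, or lie on the same side and share any neighbor on the opposite side. The proof then concludes with a single invocation of Theorem~\ref{connected-sum}. There is essentially no obstacle here; the only substantive content is the routine verification that the glued map $\sigma$ is indeed a coaffination of $B+C$, with the heavy lifting absorbed into Theorem~\ref{connected-sum} itself.
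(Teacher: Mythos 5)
Your reduction is sound: the map that acts as $\sigma_B$ on $V(B)$ and $\sigma_C$ on $V(C)$ is indeed an automorphism of the join $B+C$ (all cross edges are preserved automatically, and the join adds no edges inside $V(B)$ or $V(C)$, so fixed-point-freeness and non-adjacency of $x$ and $\sigma(x)$ carry over), $B+C$ is connected because both summands are nonempty, and associativity of the join then lets you invoke Theorem~\ref{connected-sum} with $G=A$, $H=B+C$. Be aware, though, that this is not how the statement is handled in the present paper: Theorem~\ref{three-summands} is simply quoted from \cite{MR2489268} (Theorem 3.6 there), with no proof given, and Theorem~\ref{connected-sum} is quoted from the same source, where it appears \emph{later} (Theorem 4.6). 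So your argument is a genuinely different route from the source's: there the three-summand result is established first, by the paper's own coaffination/divergence machinery, and cannot have been obtained as a corollary of the two-summand result. What your route buys is a one-line deduction inside the present paper, treating Theorem~\ref{connected-sum} as a black box; what it costs is logical independence --- if one ever needed Theorem~\ref{three-summands} as an ingredient in proving Theorem~\ref{connected-sum}, your derivation would be circular, so it should be presented as a remark that 3.6 follows from 4.6, not as a replacement for the original proof. One convention worth stating explicitly: graphs here are finite, simple and nonempty (as you note, a graph with a coaffination has at least two vertices), which is exactly what guarantees the connectedness of $B+C$; with empty summands allowed the statement itself would fail.
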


\section{The complements of regular graphs of low degree}
\label{sec:orgdc4ea56}

\subsection{The case \(k=1\)}
\label{sec:org0042a4f}

\begin{theorem}
The complement of a \(1\)-regular graph with \(n\) vertices is divergent if and only if \(n\geq 6\).
\end{theorem}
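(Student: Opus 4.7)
The plan is very short because the theorem reduces immediately to Theorem \ref{octa}. The first step is to identify the structure of $1$-regular graphs: every vertex has degree exactly $1$, so the edges form a perfect matching and the graph is a disjoint union of $K_2$'s. In particular, if $G$ is $1$-regular on $n$ vertices, then $n$ must be even, say $n = 2m$, and $G \cong mK_2$.

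Next I would pass to the complement. By the definition given in Section 2.1, the $m$-th octahedron is $O_m = \overline{mK_2}$, so $\overline{G} \cong O_m$ with $m = n/2$.

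Finally I would simply invoke Theorem \ref{octa}, which states that $O_m$ is divergent if and only if $m \geq 3$. Translating back via $m = n/2$, this is equivalent to $n \geq 6$, yielding both directions of the claim.

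There is essentially no obstacle here; the only thing worth flagging is the implicit parity condition (a $1$-regular graph can only exist for even $n$), but since the question is phrased as ``divergent iff $n \geq 6$'' and the only $n$ for which such a graph exists are even, the equivalence is really between ``$n \in \{2,4\}$'' (convergent/Helly cases $O_1, O_2$) and ``$n \in \{6,8,10,\dots\}$'' (divergent cases $O_m$ with $m \geq 3$).
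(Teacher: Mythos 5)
Your proposal is correct and follows essentially the same route as the paper: identify a $1$-regular graph as $mK_2$, observe $\overline{mK_2}=O_m$, and apply Theorem \ref{octa}. The only cosmetic difference is that the paper handles the small cases $O_1$, $O_2$ explicitly (noting they are Helly, hence convergent) rather than citing the ``only if'' direction of Theorem \ref{octa}, and your remark on the parity of $n$ is a reasonable clarification.
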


\begin{proof}
A \(1\)-regular graph is the disjoint union of \(m\) copies of \(K_{2}\). It follows that the complements of the \(1\)-regular graphs are precisely the oc\-ta\-he\-drons \(O_{m}\). From Theorem \ref{octa}, we obtain that if the order of the graph is greater than six, then the graph is divergent. Now, the graph \(O_{1}\) consists of two isolated vertices and \(O_{2}\) is a \(4\)-cycle, and both such graphs are Helly and convergent.
\end{proof}

\subsection{The case \(k=2\)}
\label{sec:orgdc2abb7}

\begin{theorem}
\label{caseke2}
The complement of every \(2\)-regular graph with \(n\) vertices is divergent if \(n\geq 9\).
\end{theorem}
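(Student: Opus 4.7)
The plan is to decompose $G$ into its connected components (which are cycles) and apply one of the divergence criteria from Section~2 according to the number of components. Writing $G = C_{n_1}\cup\cdots\cup C_{n_m}$ with each $n_i\geq 3$ and $\sum_i n_i = n\geq 9$, the identity $\overline{G\cup H}=\overline{G}+\overline{H}$ yields
\[
\overline{G} \;=\; \overline{C_{n_1}} + \overline{C_{n_2}} + \cdots + \overline{C_{n_m}}.
\]
I would then split the argument into cases by $m$.

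For $m=1$, the graph is $C_n$ with $n\geq 9\geq 8$, so Theorem \ref{big-cycles} gives divergence directly. For $m=2$, the inequality $n_1+n_2\geq 9$ together with $n_i\geq 3$ forces at least one of the $n_i$, say $n_2$, to satisfy $n_2\geq 5$; in that case $\overline{C_{n_2}}$ is connected (it is $C_5$ when $n_2=5$, and for $n_2\geq 6$ any two vertices share a common non-neighbor in $C_{n_2}$). Both $\overline{C_{n_1}}$ and $\overline{C_{n_2}}$ admit coaffinations as complements of cyclic graphs, so Theorem \ref{connected-sum} applies. For $m\geq 3$, I would regroup the join as
\[
\overline{G} \;=\; \overline{C_{n_1}} + \overline{C_{n_2}} + \bigl(\overline{C_{n_3}}+\cdots+\overline{C_{n_m}}\bigr)
\]
and invoke Theorem \ref{three-summands}.

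The only subsidiary point that needs justification is that the third summand above admits a coaffination when $m\geq 3$. This follows straight from the definition: if each $\sigma_i$ is a coaffination of $\overline{C_{n_i}}$, then their disjoint union is a coaffination of the join $\overline{C_{n_3}}+\cdots+\overline{C_{n_m}}$, because the join only adds edges between distinct factors while each $\sigma_i$ stabilizes its factor and sends vertices to non-neighbors within it. With this observation in hand, each case of the proof reduces to a direct application of a single cited theorem. There is no real obstacle beyond organising the three cases cleanly, so the work is essentially bookkeeping: identify which summand must be connected in the two-component case, and in the many-component case package the extra summands into a single graph that still satisfies the coaffination hypothesis.
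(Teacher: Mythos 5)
Your proposal is correct and follows essentially the same route as the paper: split by the number of cycle components, use Theorem \ref{big-cycles} for one component, Theorem \ref{connected-sum} for two (noting one cycle has at least five vertices so its complement is connected), and Theorem \ref{three-summands} for three or more. Your extra observation that the join of the remaining summands inherits a coaffination is a detail the paper leaves implicit, and it is the right way to justify the case of more than three components.
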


\begin{proof}
A \(2\)-regular graph is the disjoint union of several cycles. If there is only one cycle in the union (that is, if the graph is connected), then the result follows from Theorem \ref{big-cycles}. If there are exactly two cycles, then one of them has at least five vertices, and so its complement is connected. We can then apply Theorem \ref{connected-sum} in this case. Now, if there are three or more cycles, we can apply Theorem \ref{three-summands}.
\end{proof}

Contrary to the case \(k=1\), Theorem \ref{caseke2} is not an equivalence, since there are \(2\)-regular graphs with not more than 8 vertices and with divergent complement. Theorem \ref{big-cycles} deals with the case of cycles, and in that case, we get that \(G=C_{8}\) is the only \(2\)-regular connected graph with divergent complement and \(|G|\leq 8\). The only other \(2\)-regular graphs with at most 8 vertices are \(C_{3}\cup C_{i}\) for \(i=3,4,5\) and \(C_{4}\cup C_{4}\). The complement of \(C_{3}\cup C_{3}\) is \(K_{3,3}\) which has no triangles, and thus is Helly by Theorem \ref{helly-extended}, therefore convergent by Theorem \ref{helly-convergent}. The complements of both graphs \(C_{3}\cup C_{4}\) and \(C_{4}\cup C_{4}\) can also be shown to be Helly by applying Theorem \ref{helly-extended}. Now the complement of \(C_{3}\cup C_{5}\) is divergent, because it is isomorphic to \(\overline{C_{3}}+\overline{C_{5}}\) and then Theorem \ref{connected-sum} applies. We have thus proven that the complement \(\overline{G}\) of a \(2\)-regular graph \(G\) is convergent if and only if \(\overline{G}\) is Helly.

\section{The case \(k\geq 3\)}
\label{sec:orgfed7b54}

Given a graph \(G\), denote by \(t(G)\) the number of triangles in \(G\).

\begin{lemma}
\label{sum-triangles-cotriangles}
(\cite{1962-lorden-blue-empty-chromatic-graphs}, Lemma 1)  For a \(k\)-regular graph \(G\) with \(n\) vertices, we have:
   \begin{equation}
\label{eq-lorden}
t(G)+t(\overline{G})=\binom{n}{3}-\frac{1}{2}nk(n-k-1). \qed
   \end{equation}
\end{lemma}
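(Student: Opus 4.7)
The plan is to classify triples of vertices by the number of edges they induce and use double counting. For a 3-subset $\{u,v,w\}\subseteq V(G)$, the induced subgraph has $e\in\{0,1,2,3\}$ edges; the triple is a cotriangle exactly when $e=0$ and a triangle exactly when $e=3$. Writing $a,b,c,d$ for the numbers of triples with $e=0,1,2,3$ respectively, we have $a+b+c+d=\binom{n}{3}$ and the statement to prove becomes $b+c=\tfrac{1}{2}nk(n-k-1)$, since $t(\overline{G})+t(G)=a+d$.

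I would then derive two independent linear relations on $b,c,d$ using $k$-regularity. First, count incidences between edges of $G$ and 3-subsets containing them: $G$ has $nk/2$ edges, each lying in $n-2$ triples, while a triple with $i$ edges contributes $i$ incidences, giving $b+2c+3d=\tfrac{1}{2}nk(n-2)$. Second, count paths $P_3$ of length two in $G$: each vertex is the middle of exactly $\binom{k}{2}$ such paths, so the total is $n\binom{k}{2}=\tfrac{1}{2}nk(k-1)$; since a triple with two edges contains exactly one $P_3$ and a triangle contains three, we get $c+3d=\tfrac{1}{2}nk(k-1)$.

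Subtracting the second identity from the first yields $b+c=\tfrac{1}{2}nk(n-2)-\tfrac{1}{2}nk(k-1)=\tfrac{1}{2}nk(n-k-1)$, and therefore $t(G)+t(\overline{G})=\binom{n}{3}-(b+c)=\binom{n}{3}-\tfrac{1}{2}nk(n-k-1)$. There is no real obstacle in this argument; the only technical point requiring attention is that each $P_3$ has a uniquely determined middle vertex (the one of degree two in the induced subgraph), so the P_3-count is not inflated by overcounting. The proof is a routine double-counting exercise, and the assumption of $k$-regularity is used only to evaluate $|E(G)|=nk/2$ and the per-vertex $P_3$-count $\binom{k}{2}$.
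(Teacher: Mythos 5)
Your proof is correct: the two double counts (edge--triple incidences giving $b+2c+3d=\tfrac{1}{2}nk(n-2)$, and paths of length two giving $c+3d=\tfrac{1}{2}nk(k-1)$) are both valid under $k$-regularity, their difference is exactly $b+c=\tfrac{1}{2}nk(n-k-1)$, and the identification $t(G)+t(\overline{G})=a+d=\binom{n}{3}-(b+c)$ is right, since a cotriangle of $G$ is precisely a triangle of $\overline{G}$. Note that the paper itself gives no proof of this lemma --- it is quoted from Lorden's 1962 paper --- so your argument supplies a self-contained justification rather than paralleling anything in the text. For comparison, the classical one-step route (Goodman-style, essentially Lorden's) counts ``mixed angles'': a triple that is neither a triangle of $G$ nor of $\overline{G}$ (i.e.\ induces one or two edges) has exactly two vertices incident with exactly one edge of the triple, while monochromatic triples have none; since each vertex $v$ is such an apex for exactly $k(n-1-k)$ pairs of the remaining vertices, one gets $b+c=\tfrac{1}{2}nk(n-1-k)$ directly, without introducing the two separate identities. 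Your version costs one extra count but is equally rigorous; the only delicate point you flag --- that a path of length two has a unique middle vertex --- is indeed the reason the second count is clean.
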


If \(T\) is a cotriangle in a graph \(G\) and \(x\in G\), we will say that \emph{\(x\) is adjacent to \(T\)} if \(x\) is a neighbor of at least two vertices of \(T\).

\begin{lemma}
\label{atleastk}
Let \(G\) be a \(k\)-regular graph such that \(\overline{G}\) is Helly, and let \(T\) be a cotriangle in \(G\). Then there are at least \(k\) vertices in \(G\) that are adjacent to \(T\).
\end{lemma}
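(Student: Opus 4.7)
The plan is to translate the Hellyness of \(\overline{G}\) into a structural statement about \(T\) via Theorem~\ref{helly-extended}, identify the extended triangle there with the complement of the desired vertex set, and then read off the bound from the degree of the apex of the cone.

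First I would set \(T=\{a,b,c\}\) and note that, as an independent triple in \(G\), it is a triangle in \(\overline{G}\); by Theorem~\ref{helly-extended}, the extended triangle \(\hat T\) computed inside \(\overline{G}\) is therefore a cone.

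Next I would make the membership of \(\hat T\) explicit. A vertex \(y\notin T\) lies in \(\hat T\) exactly when it is \(\overline{G}\)-adjacent to at least two of \(a,b,c\), equivalently when it is \(G\)-non-adjacent to at least two of them, i.e.\ has at most one \(G\)-neighbor in \(T\). Adjoining the three vertices of \(T\) itself (which lie in \(\hat T\) since they are pairwise \(\overline{G}\)-adjacent) shows that \(\hat T\) coincides with the complement in \(V(G)\) of the set \(X\) of vertices of \(G\) adjacent to \(T\); here one uses that \(T\) is \(G\)-independent, so \(X\cap T=\emptyset\) and no element of \(T\) is accidentally counted into \(X\).

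Finally I would finish via the cone structure. Let \(v\) be an apex of \(\hat T\) in \(\overline{G}\); then \(v\) is \(\overline{G}\)-adjacent, hence \(G\)-non-adjacent, to every other vertex of \(\hat T\). Since \(G\) is \(k\)-regular, \(v\) has exactly \(k\) neighbors in \(G\), and all of them must therefore lie in \(V(G)\setminus\hat T=X\). This gives \(|X|\geq k\), which is the stated conclusion. I do not foresee a serious obstacle here: the only small point to check carefully is the identification \(X=V(G)\setminus\hat T\) (with the caveat about the three vertices of \(T\) themselves), after which regularity at the apex closes the argument immediately.
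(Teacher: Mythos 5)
Your argument is correct and follows essentially the same route as the paper: identify \(T\) as a triangle of \(\overline{G}\), invoke Theorem~\ref{helly-extended} to get that \(\hat T\) is a cone, observe that the vertices outside \(\hat T\) are exactly those adjacent to \(T\), and use the apex's degree to force at least \(k\) such vertices. The paper phrases the last step via the \((n-k-1)\)-regularity of \(\overline{G}\) (so \(|\hat T|\leq n-k\)), while you count the apex's \(k\) neighbors in \(G\) directly; these are the same count seen from complementary sides.
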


\begin{proof}
We have that \(T\) is a triangle in \(\overline{G}\) and so \(\hat{T}\), its extended triangle in \(\overline{G}\), is a cone, by Theorem \ref{helly-extended}. Since \(\overline{G}\) is \((n-k-1)\)-regular, considering the apex of the cone we obtain that \(|\hat{T}|\leq n-k\). This means that there must be at least \(k\) vertices that are neighbors to one or none of the vertices of \(T\) in \(\overline{G}\). Such vertices are adjacent to \(T\).
\end{proof}

We will use 2-switches, as defined in (\cite{MR3445306}, page 20). This means the process of substituting the edges \(\{a,b\},\{u,v\}\) in \(G\) with the edges \(\{a,u\}\), \(\{b,v\}\). Of course, this requires that \(a,u\) are not already adjacent in \(G\), and also that \(b,v\) are not adjacent in \(G\). Applying a 2-switch to a \(k\)-regular graph produces another \(k\)-regular graph.

\begin{lemma}
\label{many-cotriangles}
Let \(G\) be a \(k\)-regular with \(n\) vertices, where \(n\geq4k\), and \(x\in G\). Then \(x\) is adjacent to at most \(\binom{k}{2}(n-2k)+\binom{k}{3}\) cotriangles. If this bound is met, then the connected component of \(x\) is \(K_{k,k}\).
\end{lemma}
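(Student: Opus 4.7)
My plan is a weighted double count over non-adjacent pairs in $N := N(x)$, combined with a convexity-type estimate on common-neighborhood sizes. First, any cotriangle $T$ adjacent to $x$ satisfies $x \notin T$, since $x \in T$ would force (by independence of $T$) that $x$ has no neighbors in $T$, contradicting adjacency. Hence $|T \cap N| \in \{2, 3\}$; split the cotriangles adjacent to $x$ into $A := \{T : |T \cap N| = 2\}$ and $B := \{T : |T \cap N| = 3\}$, so the target is a bound on $|A| + |B|$.

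For each non-adjacent pair $\{u, v\} \subseteq N$, the cotriangles containing $\{u, v\}$ correspond to vertices $w \notin \{u, v\} \cup N(u) \cup N(v)$, of which there are $n - 2 - 2k + |N(u) \cap N(v)|$ (using $u \notin N(v)$ and $v \notin N(u)$). Summing over the $p := \binom{k}{2} - e_N$ non-adjacent pairs in $N$ (with $e_N$ the number of edges of $N$), each $T \in A$ is counted once and each $T \in B$ is counted three times, yielding
\begin{equation*}
|A| + 3|B| = p(n - 2 - 2k) + S, \qquad S := \sum_{\substack{\{u, v\} \subseteq N \\ u \not\sim v}} |N(u) \cap N(v)|,
\end{equation*}
so that $|A| + |B| = p(n - 2 - 2k) + S - 2|B|$.

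Two estimates close the argument. Swapping the order of summation in $S$ and writing $m_w := |N(w) \cap N|$, the vertex $x$ contributes exactly $p$ while each $w \ne x$ contributes at most $\binom{m_w}{2} \leq m_w(k - 1)/2$ (equality iff $m_w \in \{0, k\}$). Combined with the edge-count identity $\sum_{w \ne x} m_w = k(k - 1)$, this gives $S \leq p + (k - 1)\binom{k}{2}$. Second, each edge of $N$ lies in $k - 2$ triples of $N$, so at most $e_N(k - 2)$ triples of $N$ contain an edge, whence $|B| \geq \binom{k}{3} - e_N(k - 2)$. Substituting both estimates along with $p = \binom{k}{2} - e_N$, using $n \geq 4k$, and simplifying via $(k - 1)\binom{k}{2} - 2\binom{k}{3} = \binom{k+1}{3} = \binom{k}{2} + \binom{k}{3}$ consolidates the bound into
\begin{equation*}
|A| + |B| \leq \binom{k}{2}(n - 2k) + \binom{k}{3} - e_N(n - 4k + 3),
\end{equation*}
which gives the stated inequality since $n - 4k + 3 > 0$.

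The main obstacle is this final algebraic consolidation: the three auxiliary estimates all carry slack, and the way their $e_N$-dependent parts rearrange into a single nonnegative deficit term is not apparent from any individual one of them. For the equality case, strict positivity of $n - 4k + 3$ forces $e_N = 0$, and the convexity condition forces $m_w \in \{0, k\}$ for every $w \ne x$. Together with $\sum_{w \ne x} m_w = k(k - 1)$, exactly $k - 1$ vertices $w_1, \ldots, w_{k - 1} \in V(G) \setminus (\{x\} \cup N)$ satisfy $N(w_i) = N$. Writing $M := \{x, w_1, \ldots, w_{k - 1}\}$, each $u \in N$ is adjacent to every member of $M$ and so $N(u) = M$ by $k$-regularity; therefore $M \cup N$ induces $K_{k, k}$ and coincides with the connected component of $x$.
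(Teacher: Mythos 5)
Your proof is correct, but it takes a genuinely different route from the paper. The paper proves the lemma by an extremal 2-switch argument: it shows that if $N=N(x)$ contains an edge $\{a,b\}$, then (using $n\geq 4k$ to find a switchable partner edge reaching outside $C\cup N\cup\{x\}$) one can perform a 2-switch that destroys that edge, creates no new edge inside $N$, and strictly increases the number of cotriangles adjacent to $x$; this reduces everything to the case where $N$ is independent, where the count $\binom{k}{2}(n-2k)+\binom{k}{3}$ and the equality characterization ($n_{ab}=k-1$ for all pairs, hence $K_{k,k}$) are obtained directly. You instead avoid switching altogether and do one global double count over non-adjacent pairs of $N$, bounding $S$ by convexity together with the identity $\sum_{w\neq x}m_w=k(k-1)$, and bounding $|B|$ from below, which consolidates into the single inequality with the explicit deficit term $e_N(n-4k+3)$; equality then immediately forces $e_N=0$ and $m_w\in\{0,k\}$, and the component is $K_{k,k}$. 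Your version buys a self-contained, within-one-graph argument with a quantitative measure of how far a vertex with edges inside its neighborhood falls short of the bound, and it uses $n\geq 4k$ only to make $n-4k+3$ positive (so it actually works under a slightly weaker hypothesis), whereas the paper's reduction makes the final count essentially trivial once $N$ is independent, at the cost of comparing cotriangle counts across different graphs. One small point to state explicitly in your equality analysis: the $k-1$ vertices with $m_w=k$ indeed lie outside $N$, either because a vertex of $N$ cannot be adjacent to all $k$ vertices of $N$ (no loops) or because $e_N=0$ forces $m_w=0$ for $w\in N$; this is a one-line remark and does not affect correctness.
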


\begin{proof}
Let \(x\in G\), as in our hypothesis. We prove first that if there is an edge \(e=\{a,b\}\) in \(N=N_{G}(x)\), we can perform a 2-switch on \(G\), obtaining a graph \(G'\), where \(a\) and \(b\) are no longer adjacent, no new edges appear between vertices of \(N\), and \(x\) is adjacent to at least the same amount of cotriangles in \(G'\) as it was in \(G\).

The edges in \(G\) that cannot be switched with \(\{a,b\}\) are of the form \(\{r,s\}\), and where both \(r,s\) are neighbors of either \(a\) or \(b\); or one of \(\{r,s\}\) is a neighbor of both \(a,b\).

Consider then the edge \(e=\{a,b\}\) in \(N_{G}(x)\). Let \(C\) be the set of common neighbors of the vertices \(a,b\) different from \(x\). Suppose that \(|C|=t\). Let \(Z\) be the set of vertices that are at distance at least 2 from \(x\), and let \(Z_{1}\subseteq Z\) be the vertices in \(Z\) that are not neighbors of either \(a,b\). Then in \(Z_{1}\) there are at least \(4k-[t+2(k-t-2)+k+1]=k+t+3\) vertices.  We claim that there is an edge from a vertex in \(Z_{1}\) to a vertex that is not in  \(C\cup N\cup\{x\}\). If not, then all edges incident with \(Z_{1}\) have one end in \(Z_{1}\) and the other in \(C\cup N\). In this case, there are at least \((k+t+3)k\) edges incident with \(Z_{1}\) and at most \(t(k-2)+(k-2)(k-1)\) possible edges from \(C\cup N\) to \(Z_{1}\). If the former number was actually less than the latter, we would have that \(6k+2t\leq 2\), which is impossible. Therefore, there is an edge joining a vertex \(r\in Z_{1}\) to a vertex \(s\not\in N\) that is not a neighbor of at least one among \(a, b\). Without loss, assume that \(s\) is not a neighbor of \(b\).

We perform a 2-switch to \(G\) removing the edges \(\{a,b\}\), \(\{r,s\}\) and adding the edges \(\{a,r\},\{b,s\}\), obtaining the graph \(G'\). Since neither of \(r,s\) is in \(N\), the graph \(G'\) has one less edge in \(N\) than \(G\). We claim that \(x\) is adjacent to no less cotriangles in \(G'\) than it was in \(G\). Suppose that \(T=\{u,v,w\}\) was a cotriangle in \(G\) but is not a cotriangle in \(G'\). The only way that can happen is if exactly one of the new edges involves two vertices from \(T\). Suppose \(u=a, v=r\). Then \(w\) could have been any of the other elements in \(N\) different from \(a\) and \(b\) and not a neighbor of \(r\). That is, in the process of going from \(G\) to \(G'\), the vertex \(x\) loses at most \(k-2\) adjacent cotriangles. On the other hand, any set of the form \(\{a,b,z\}\) with \(z\in Z_{1}-\{r,s\}\) was not a cotriangle in \(G\) but is a cotriangle in \(G'\). This means that the vertex \(x\) gains at least \(k+t+1\) new adjacent cotriangles in the process of going from \(G\) to \(G'\). 

Because of the argument of the previous paragraph, we may now assume that the \(k\)-regular graph \(G\) is such that there are no edges among the vertices in \(N=N_{G}(x)\). Then \(x\) is adjacent to exactly \(\binom{k}{3}\) cotriangles where the three vertices of the cotriangle are contained in \(N\). We will estimate the amount of cotriangles adjacent to \(x\) where exactly two vertices of the cotriangle are elements of \(N\). For each pair \(ab\) of vertices of \(N\), denote by \(n_{ab}\) the number of common neighbors of \(a,b\) different from \(x\). Then there are \(2k-2-n_{ab}\) vertices different from \(x\) that are neighbors of either \(a\) or \(b\). Hence there are \(n-(2k-2-n_{ab})-k-1=n-3k+1+n_{ab}\) vertices that can be used to extend the set \(\{a,b\}\) to a cotriangle adjacent to \(x\) of the required form. Summing over all pairs of vertices of \(N\) we get \(S=\binom{k}{2}(n-3k+1)+\sum n_{ab}\). We have that \(\binom{k}{2}(n-2k)-S=\binom{k}{2}(k-1)-\sum n_{ab}\). Since \(0\leq n_{ab}\leq k-1\) for each pair \(a,b\), this expression is non-negative, proving our first claim. It is equal to zero if and only if \(n_{ab}=k-1\) for each pair \(a,b\) of vertices of \(N\). If this were the case, then the open neighborhood of each of the vertices in \(N\) consists of the same vertices, therefore the connected component that contains \(x\) is isomorphic to \(K_{k,k}\).
\end{proof}

\section{Proof of the main theorem}
\label{sec:org03be13a}

\begin{theorem}
\label{main-theorem}
If \(n>3k+\sqrt{2k^{2}-k}\), then the complement of any \(k\)-regular graph with \(n\) vertices is not Helly.
\end{theorem}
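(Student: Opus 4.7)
The natural approach is a double counting argument. Assume for contradiction that $\overline{G}$ is Helly, and let $P$ be the number of incidence pairs $(T,x)$ where $T$ is a cotriangle of $G$ and $x$ is a vertex adjacent to $T$ in the sense of the definition introduced before Lemma \ref{atleastk}.

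Counting $P$ by cotriangles first: by Lemma \ref{atleastk} the Helly hypothesis forces every cotriangle of $G$ to be adjacent to at least $k$ vertices, so $P \geq k\,t(\overline{G})$. Counting $P$ by vertices first: the hypothesis $n > 3k+\sqrt{2k^{2}-k}$ implies $n \geq 4k$ (since $\sqrt{2k^{2}-k} \geq k$ for $k\geq 1$), so Lemma \ref{many-cotriangles} is available and gives $P \leq n\bigl[\binom{k}{2}(n-2k)+\binom{k}{3}\bigr]$. Combining yields an inequality of the form $k\,t(\overline{G}) \leq n\bigl[\binom{k}{2}(n-2k)+\binom{k}{3}\bigr]$.

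To turn this into a contradiction, I need a lower bound on $t(\overline{G})$ that depends only on $n$ and $k$. Lemma \ref{sum-triangles-cotriangles} already expresses $t(G)+t(\overline{G})$ exactly in those terms, and the elementary observation that each vertex of a $k$-regular graph lies in at most $\binom{k}{2}$ triangles gives $t(G) \leq nk(k-1)/6$, hence a concrete lower bound on $t(\overline{G})$.

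Substituting this lower bound into the previous inequality and clearing the common factor $nk$ should reduce everything to a quadratic condition in $n$ of the shape $n^{2}-6kn+7k^{2}+k \leq 0$, whose discriminant is $4(2k^{2}-k)$ and whose roots are precisely $3k\pm\sqrt{2k^{2}-k}$; the hypothesis $n > 3k+\sqrt{2k^{2}-k}$ then violates the quadratic inequality and produces the desired contradiction. The main obstacle I anticipate is keeping the arithmetic honest: the upper bound on $t(G)$ via the per-vertex count of triangles is fairly crude, so one has to check that the slack it introduces is exactly compensated by the shape of the bounds in Lemmas \ref{atleastk} and \ref{many-cotriangles} in order for the resulting threshold to land on $3k+\sqrt{2k^{2}-k}$ rather than something weaker.
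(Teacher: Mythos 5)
Your proposal is correct and follows essentially the same route as the paper: the paper also double-counts vertex--cotriangle incidences (phrased as edges of an auxiliary bipartite graph), bounds them below via Lemma \ref{atleastk} together with the cotriangle count from Lemma \ref{sum-triangles-cotriangles} and the bound \(t(G)\leq \frac{1}{6}nk(k-1)\), bounds them above via Lemma \ref{many-cotriangles}, and reduces to the quadratic \(n^{2}-6kn+7k^{2}+k\leq 0\) with roots \(3k\pm\sqrt{2k^{2}-k}\). The arithmetic indeed lands exactly on this threshold, so no further adjustment is needed.
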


\begin{proof}
Let \(k>1\) and \(n\) as in the hypothesis of the Theorem. Then \(n>4k\). Let \(G\) be a \(k\)-regular graph with \(n\) vertices such that \(\overline{G}\) is Helly. Let \(B\) be the bipartite graph where one part is the set of vertices of \(G\) and the other part is the set of cotriangles of \(G\). In \(B\), a vertex corresponding to a vertex in \(G\) is a neighbor to the vertex corresponding to a cotriangle whenever the vertex is adjacent to the cotriangle. Denote with \(e(B)\) the amount of edges of \(B\). We will estimate \(e(B)\) from below and from above.

In the graph \(G\), each vertex is in at most \(\binom{k}{2}\) triangles. Therefore, there are at most \(\frac{n\binom{k}{2}}{3}=\frac{1}{6}nk(k-1)\) triangles. Since we have that \(t(G)+t(\overline{G})=\binom{n}{3}-\frac{1}{2}nk(n-1-k)\) by Lemma \ref{sum-triangles-cotriangles} , there are at least
\begin{equation}
\label{cnk}
C_{n,k}=\binom{n}{3}-\frac{1}{2}nk(n-1-k)-\frac{1}{6}nk(k-1)
\end{equation}
cotriangles in \(G\). By Lemma \ref{atleastk}, since each cotriangle is adjacent to at least \(k\) vertices, we have that \(kC_{n,k}\leq e(B)\).

On the other hand, by Lemma \ref{many-cotriangles}, the maximum amount of cotriangles adjacent to any fixed vertex in \(G\) is \(T_{n,k}=\binom{k}{2}(n-2k)+\binom{k}{3}\). Therefore, \(e(B)\leq nT_{n,k}\).

It follows that \(kC_{n,k}\leq e(B)\leq nT_{n,k}\). Now, the difference \(nT_{n,k}-kC_{n,k}\) is equal to \(-\frac{kn}{6}\,a(n,k)\), where  \(a(n,k)=n^{2}-6kn+(7k^{2}+k)\). Hence, we must have \(a(n,k)\leq 0\). Fixing \(k\), and considering \(a(n,k)\) as a quadratic polynomial on \(n\), its roots are \(3k\pm\sqrt{2k^{2}-k}\). From this, the theorem follows.
\end{proof}

\section{The cubic case}
\label{sec:org2e9e85a}

Theorem \ref{main-theorem} implies that the complement of a cubic graph is not Helly if the graph has at least 14 vertices. This bound is tight, since the graph \(K_{3,3}\cup K_{3,3}\) has 12 vertices and its complement is Helly. However, unlike the cases \(k=1\) and \(k=2\), in this case there are graphs that are not Helly but are convergent under the clique graph operator. As examples of that behavior we can give the graphs in Figure \ref{comphelly}, which were found by a computer search. We conjecture that there are cubic graphs \(G\) with an arbitrarily large number of vertices, such that \(\overline{G}\) is Helly.

\begin{figure}\centering
\subfigure[14 vertices]{\includegraphics[width=.3\textwidth]{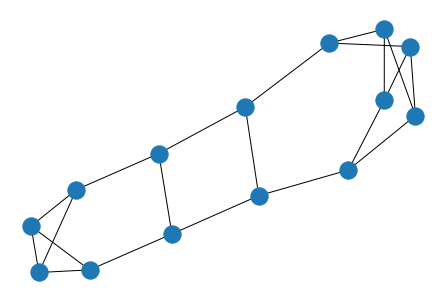}}\hspace*{\fill}
\subfigure[16 vertices]{\includegraphics[width=.3\textwidth]{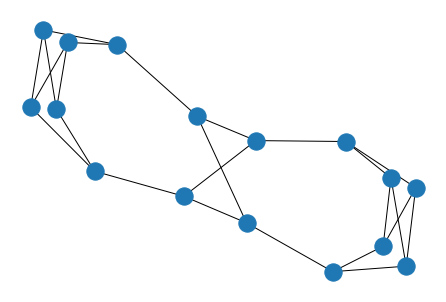}}\hspace*{\fill}
\subfigure[18 vertices]{\includegraphics[width=.3\textwidth]{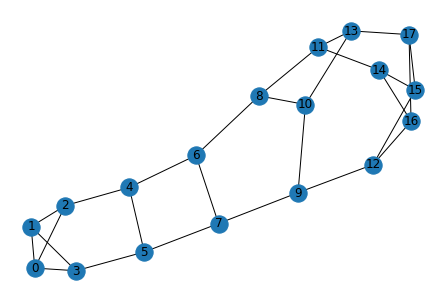}}
\caption{\label{comphelly}Cubic graphs with convergent (non Helly) complements}
\end{figure}

\section{Acknowledgments}
\label{sec:org0210cf8}

Partially supported by CONACYT, grant A1-S-45528.

\bibliographystyle{plain}
\bibliography{complements}

\begin{thebibliography}{10}

\bibitem{2021-cedillo-pizana-clique-convergence-is-undecidable-for-automatic-graphs}
C.~Cedillo and M.~A. Piza{\~n}a.
\newblock Clique-convergence is undecidable for automatic graphs.
\newblock {\em Journal of Graph Theory}, 96(3):414--437, 2021.

\bibitem{MR3445306}
G.~Chartrand, L.~Lesniak, and P.~Zhang.
\newblock {\em Graphs \& digraphs}.
\newblock Textbooks in Mathematics. CRC Press, Boca Raton, FL, sixth edition,
  2016.

\bibitem{dourado-grippo-safe-2022-on-the-generalized-helly-property-of-hypergraphs-cliques-and-bicliques}
M.~C. Dourado, L.~N. Grippo, and M.~D. Safe.
\newblock On the generalized {H}elly property of hypergraphs, cliques, and
  bicliques.
\newblock {\em arXiv}, 2022.
\newblock https://arxiv.org/abs/2201.12610.

\bibitem{2007-dourado-protti-szwarcfiter-characterization-and-recognition-of-generalized-clique-helly-graphs}
M.~C. Dourado, F.~Protti, and J.~L. Szwarcfiter.
\newblock Characterization and recognition of generalized clique-{H}elly
  graphs.
\newblock {\em Discrete Appl. Math.}, 155(18):2435--2443, 2007.

\bibitem{2009-dourado-protti-szwarcfiter-complexity-aspects-of-the-helly-property}
M.~C. {Dourado}, F.~{Protti}, and J.~L. {Szwarcfiter}.
\newblock {Complexity aspects of the Helly property: graphs and hypergraphs.}
\newblock {\em {Electron. J. Comb.}}, DS17:53, 2009.

\bibitem{1989-dragan-centers-of-graphs-and-the-helly-property}
F.~F. Dragan.
\newblock {\em Centers of graphs and the Helly property}.
\newblock PhD thesis, Moldova State University, 1989.

\bibitem{MR0329947}
F.~Escalante.
\newblock \"{U}ber iterierte {C}lique-{G}raphen.
\newblock {\em Abh. Math. Sem. Univ. Hamburg}, 39:59--68, 1973.

\bibitem{MR0256911-djvu}
F.~Harary.
\newblock {\em Graph theory}.
\newblock Addison-Wesley Publishing Co., Reading, Mass.-Menlo Park,
  Calif.-London, 1969.

\bibitem{2004-larrion-mello-morgana-neumann-lara-pizana-the-clique-operator-on-cographs-and-serial-graphs}
F.~Larri\'{o}n, C.~P. de~Mello, A.~Morgana, V.~Neumann-Lara, and M.~A.
  Piza\~{n}a.
\newblock The clique operator on cographs and serial graphs.
\newblock {\em Discrete Math.}, 282(1-3):183--191, 2004.

\bibitem{MR2489268}
F.~Larri\'{o}n, V.~Neumann-Lara, and M.~A. Piza\~{n}a.
\newblock On expansive graphs.
\newblock {\em European J. Combin.}, 30(2):372--379, 2009.

\bibitem{2014-larrion-pizana-villarroel-flores-the-clique-behavior-of-circulants-with-three-small-jumps}
F.~Larri\'{o}n, M.~A. Piza\~{n}a, and R.~Villarroel-Flores.
\newblock The clique behavior of circulants with three small jumps.
\newblock {\em Ars Combin.}, 113A:147--160, 2014.

\bibitem{chessboard-graphs}
F.~Larri{\'o}n, M.~A. Piza{\~n}a, and R.~Villarroel-Flores.
\newblock The clique operator on matching and chessboard graphs.
\newblock {\em Discrete Math.}, 309(1):85--93, 2009.

\bibitem{2007-lin-szwarcfiter-faster-recognition-of-clique-helly-and-hereditary-clique-helly-graphs}
M.~Lin and J.~L Szwarcfiter.
\newblock Faster recognition of clique-{H}elly and hereditary clique-{H}elly
  graphs.
\newblock {\em Information Processing Letters}, 103(1):40--43, 2007.

\bibitem{1962-lorden-blue-empty-chromatic-graphs}
G.~Lorden.
\newblock Blue-empty chromatic graphs.
\newblock {\em Amer. Math. Monthly}, 69:114--120, 1962.

\bibitem{zbMATH03641500}
V.~{Neumann-Lara}.
\newblock {On clique-divergent graphs}.
\newblock {Probl\`emes combinatoires et th\'eorie des graphes, Orsay 1976,
  Colloq. int. CNRS No. 260, 313-315 (1978).}, 1978.

\bibitem{1997-szwarcfiter-recognizing-clique-helly-graphs}
J.~L. Szwarcfiter.
\newblock Recognizing clique-{H}elly graphs.
\newblock {\em Ars Combin.}, 45:29--32, 1997.

\end{thebibliography}
\end{document}